\renewcommand{\thesubfigure}{\thefigure} \makeatletter \renewcommand{\p@subfigure}{} \renewcommand{\@thesubfigure}{\bf Figure \thesubfigure:\hskip\subfiglabelskip} \makeatother
\theoremstyle{plain}
\newtheorem{theorem}{Theorem}
\newtheorem{corollary}{Corollary}
\newtheorem{lemma}{Lemma}
\theoremstyle{definition}
\newcommand{\NN}{\mathbb{N}}
\newcommand{\CC}[0]{\mathbb{C}}
\newcommand{\HH}{\mathbb{H}}
\newcommand{\ie}[0]{\textnormal{i}}
\newcommand{\de}[0]{\mathrel{\mathop:}=}
\newcommand{\dif}[1]{\textnormal{d}#1}
\title{Lehmer pairs and derivatives of Hardy's $Z$-function}
\author{Aleksander Simoni\v{c}}
\address{Faculty of Mathematics and Physics \\ University of Ljubljana \\ Jadranska 19 \\ 1000 Ljubljana \\ Slovenia}
\email{aleksander.simonic@student.fmf.uni-lj.si}
\subjclass[2010]{11M06, 11M26}
\keywords{Lehmer pair, Hardy's $Z$-function, Riemann hypothesis}
\begin{document}

\begin{abstract}
Occurrences of very close zeros of the Riemann zeta function are strongly connected with Lehmer pairs and with the Riemann Hypothesis. The aim of the present note is to derive a condition for a pair of consecutive simple zeros of the $\zeta$-function to be a Lehmer pair in terms of derivatives of Hardy's $Z$-function. Furthermore, we connect Newman's conjecture with stationary points of the $Z$-function, and present some numerical results.
\end{abstract}

\maketitle
\thispagestyle{empty}

\section{Introduction}

Hardy's $Z$-function is defined by:
\[
   Z(t) \de e^{\ie\vartheta(t)}\zeta\left(\frac{1}{2}+\ie t\right), \;\; \vartheta(t) \de \frac{1}{2\ie}\log{\frac{\Gamma\left(\frac{1}{4}+\ie\frac{t}{2}\right)}{\Gamma\left(\frac{1}{4}-\ie\frac{t}{2}\right)}}-\frac{\log\pi}{2}t.
\]
In 1956, D.~H.~Lehmer found a pair of very close zeros of the $Z$-function. Visually this means that the graph $Z(t)$ barely crosses the $t$-axis at these zeros, and this poses a threat to the validity of the Riemann Hypothesis, see \cite{Edw}. The occurrence of such pairs of close zeros is known as \emph{Lehmer's phenomenon} and pairs are called \emph{Lehmer pairs}. The paper \cite{Csordas} gives a precise meaning to the notion of Lehmer pairs and proves a result which is today's most promising method to find lower bounds $\lambda$ of the \emph{de Bruijn-Newman constant} $\Lambda$. It should be noted that the Riemann Hypothesis is equivalent to $\Lambda\leq0$ and an infinite number of Lehmer pairs implies $\Lambda\geq0$, which is known as \emph{Newman's conjecture}.

Let $\left\{\gamma_1,\gamma_2\right\}$ be a pair of two simple zeros of the $Z$-function and define
\begin{equation}
\label{eq:gbar}
\bar{g}_{\left\{\gamma_1,\gamma_2\right\}} \de \left(\gamma_1-\gamma_2\right)^2 \sum_{\gamma\notin\{\gamma_1,\gamma_2\}} \frac{1}{(\gamma-\gamma_1)^2}+\frac{1}{(\gamma-\gamma_2)^2}
\end{equation}
where $\gamma$ goes through all zeros of the $Z$-function. Assuming the Riemann Hypothesis, a pair $\left\{\gamma_-,\gamma_+\right\}$ of two consecutive simple zeros of the $Z$-function is said to be a Lehmer pair if $\bar{g}_{\left\{\gamma_-,\gamma_+\right\}}<4/5$. If this is the case, then
\[
   \lambda_{\left\{\gamma_-,\gamma_+\right\}} \de \frac{\left(\gamma_+-\gamma_-\right)^2}{2\bar{g}_{\left\{\gamma_-,\gamma_+\right\}}}\left(\left(1-\frac{5}{4}\bar{g}_{\left\{\gamma_-,\gamma_+\right\}}\right)^{\frac{4}{5}}-1\right)
   \leq \Lambda.
\]
Because the bound $4/5$ is the least restrictive possible that allows the proof of the above inequality to go through, it is not surprising that Lehmer pairs are not so rare; the first thousand zeros contain $48$ Lehmer pairs.

In order to produce an ``analytic'' definition of Lehmer pair, Stopple gives in \cite[Theorem 1]{Stopple} a more restrictive definition in terms of the first three derivatives of the Riemann xi-function $\Xi(t)$ at the pair's zeros. In this note we extend his result to derivatives of the $Z$-function.

\begin{theorem}
\label{thm:main}
Let $Z(t)$ be Hardy's $Z$-function and define the real function $\widehat{F}$ by
\begin{equation}
\label{eq:F}
\widehat{F}(t) \de -\frac{Z'''}{Z'}(t)+\frac{3}{4}\left(\frac{Z''}{Z'}\right)^2(t).
\end{equation}
Let $\left\{\gamma_1,\gamma_2\right\}$ be a pair of two simple zeros of the $Z$-function and define
\[
   \hat{g}_{\left\{\gamma_1,\gamma_2\right\}} \de \frac{1}{3}\left(\gamma_1-\gamma_2\right)^2\left(\widehat{F}(\gamma_1) + \widehat{F}(\gamma_2)\right) - 2.
\]
Under the Riemann Hypothesis we have
\begin{equation}
\label{eq:leh.cond}
0 < \bar{g}_{\left\{\gamma_1,\gamma_2\right\}} - \hat{g}_{\left\{\gamma_1,\gamma_2\right\}} < 3\left(\gamma_1-\gamma_2\right)^2\left(\frac{1}{\gamma_1^2} + \frac{1}{\gamma_2^2}\right) < 3\hat{g}_{\left\{\gamma_1,\gamma_2\right\}}.
\end{equation}
\end{theorem}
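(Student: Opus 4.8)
The plan is to transfer everything to the completed $\xi$-function, read the sum over zeros off a Hadamard product exactly as in the $\Xi$-case, and then control the discrepancy caused by passing from $\Xi$ to $Z$ via a gamma-factor estimate. Writing $\Xi(t)\de\xi\left(\frac12+\ie t\right)$, the functional-equation form $\xi(s)=\frac12 s(s-1)\pi^{-s/2}\Gamma(s/2)\zeta(s)$ gives, at $s=\frac12+\ie t$,
\[
   \Xi(t)=c(t)Z(t),\qquad c(t)=-\frac12\left(t^2+\frac14\right)\pi^{-1/4}\left|\Gamma\left(\frac14+\ie\frac t2\right)\right|,
\]
where $c$ is real-analytic, even and nonvanishing on $\RR$; hence $Z$ and $\Xi$ share their zeros and simple zeros stay simple. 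I would also introduce the $\Xi$-analogue $F(t)\de-\frac{\Xi'''}{\Xi'}(t)+\frac34\left(\frac{\Xi''}{\Xi'}\right)^2(t)$ of $\widehat F$.

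First I would record two identities at a simple common zero $\gamma$. Since $Z(\gamma)=0$, differentiating $\Xi=cZ$ three times and simplifying (the terms in $\frac{c'}{c}\frac{Z''}{Z'}$ cancel) yields $F(\gamma)=\widehat F(\gamma)-3(\log c)''(\gamma)$. Next, as $\Xi$ is entire of order $1$ with (under RH) only real zeros, its Hadamard product gives $\frac{\Xi'}{\Xi}(t)=\sum_{\gamma'}\frac{1}{t-\gamma'}$ in the symmetric sense; comparing this, expanded about $\gamma$, with the local expansion $\frac{\Xi'}{\Xi}(t)=\frac{1}{t-\gamma}+b_1+(2b_2-b_1^2)(t-\gamma)+\cdots$, where $b_1=\frac{\Xi''(\gamma)}{2\Xi'(\gamma)}$ and $b_2=\frac{\Xi'''(\gamma)}{6\Xi'(\gamma)}$, and using $F(\gamma)=3b_1^2-6b_2$, gives
\[
   F(\gamma)=3\sum_{\gamma'\neq\gamma}\frac{1}{(\gamma-\gamma')^2}.
\]
Inserting this into the definition of $\hat g$ with $F$ in place of $\widehat F$ reproduces $\bar g$ exactly (Stopple's relation for $\Xi$; the two terms $\gamma'\in\{\gamma_1,\gamma_2\}$ in the sums cancel the $-2$). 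Combining the two identities,
\[
   \bar g_{\{\gamma_1,\gamma_2\}}-\hat g_{\{\gamma_1,\gamma_2\}}=-(\gamma_1-\gamma_2)^2\left((\log c)''(\gamma_1)+(\log c)''(\gamma_2)\right),
\]
so the theorem reduces to pointwise bounds on $(\log c)''$ at the zeros.

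The main analytic work is to show $0<-(\log c)''(\gamma)<3/\gamma^2$ for every ordinate $\gamma$. From the explicit form of $c$,
\[
   (\log c)''(t)=\frac{\frac12-2t^2}{\left(t^2+\frac14\right)^2}-\frac14\,\mathrm{Re}\,\Psi'\!\left(\frac14+\ie\frac t2\right),
\]
with $\Psi'$ the trigamma function. The rational part is $-2/t^2+O(t^{-4})$, while $\mathrm{Re}\,\Psi'\!\left(\frac14+\ie\frac t2\right)=-1/t^2+O(t^{-3})$, so $-(\log c)''(t)=\frac{7}{4t^2}(1+o(1))$ lies strictly between $0$ and $3/t^2$. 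I would prove the trigamma estimate with explicit constants from $\Psi'(s)=\int_0^\infty\frac{u e^{-su}}{1-e^{-u}}\,\dif u$, integrating by parts twice to extract the leading $-1/t^2$ (the coefficient coming from $u e^{-u/4}/(1-e^{-u})$ at $u=0$) and bounding the remainder in $L^1$. The hard part is making these bounds \emph{uniform} and explicit down to the first ordinate $\gamma\approx14.13$ so that both strict inequalities close with the stated constant $3$; above a moderate height the value $\tfrac74 t^{-2}$ leaves ample room.

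For the remaining inequality $3(\gamma_1-\gamma_2)^2\left(\gamma_1^{-2}+\gamma_2^{-2}\right)<3\hat g_{\{\gamma_1,\gamma_2\}}$, set $P\de(\gamma_1-\gamma_2)^2\left(\gamma_1^{-2}+\gamma_2^{-2}\right)$; since $\hat g=\bar g-(\bar g-\hat g)>\bar g-3P$ by the bound just obtained, it suffices to prove $\bar g_{\{\gamma_1,\gamma_2\}}\geq4P$, i.e. $\sum_{\gamma'\neq\gamma_j}(\gamma_j-\gamma')^{-2}\geq4/\gamma_j^2$ for $j=1,2$. Here I would use only the reflected zeros: for a positive ordinate $\gamma$, each positive ordinate $\gamma''\leq\gamma$ has a partner $-\gamma''$ with $|\gamma-(-\gamma'')|\leq2\gamma$, so $\sum_{\gamma'\neq\gamma}(\gamma-\gamma')^{-2}\geq N(\gamma)/(4\gamma^2)$, where $N(\gamma)$ counts positive ordinates up to $\gamma$; since $N(\gamma)\geq16$ once $\gamma$ exceeds the $16$th ordinate ($\approx67$), this already gives $\geq4/\gamma^2$, and the finitely many lower ordinates are verified directly. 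I expect the two genuinely delicate points to be the uniform trigamma bound near $\gamma\approx14.13$ and this direct check for the lowest ordinates.
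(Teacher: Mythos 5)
Your proposal is correct in substance and reaches the theorem by a route that is structurally parallel to the paper's but genuinely different in two places. First, you pass from $\Xi$ to $Z$ in a single real step via $\Xi(t)=c(t)Z(t)$ with $c(t)=-\frac12\left(t^2+\frac14\right)\pi^{-1/4}\left|\Gamma\left(\frac14+\ie\frac t2\right)\right|$, so the identity $F(\gamma)=\widehat F(\gamma)-3(\log c)''(\gamma)$ stays real throughout; the paper instead factors $\xi=H\zeta$ and $\zeta=e^{-\ie\vartheta}Z$ in the complex plane and must check that the imaginary parts cancel (its $3\vartheta''(\gamma_0)$ against $\Im\left\{\zeta\left(2,\rho_0/2\right)\right\}=-4\vartheta''(\gamma_0)$). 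The two error terms are literally the same object: since $\Psi'(z)=\zeta(2,z)$, your $-(\log c)''(\gamma)$ equals the paper's $\varepsilon(\gamma)=\frac{2\gamma^2-1/2}{(\gamma^2+1/4)^2}+\frac14\Re\left\{\zeta\left(2,\frac14+\ie\frac\gamma2\right)\right\}$, and your target $0<-(\log c)''(\gamma)<3/\gamma^2$ is exactly the paper's Lemma \ref{lemma:H}. That lemma is proved there by an elementary series-versus-integral comparison valid for all $t>1$, which disposes of what you call the hard uniformity issue near $\gamma\approx14.13$ more cheaply than the integration-by-parts scheme you sketch but do not carry out; your asymptotic $-(\log c)''(t)=\frac{7}{4t^2}(1+o(1))$ is correct and confirms there is room to spare. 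Second, for the last inequality the paper proves an explicit gap bound $\gamma_{n+1}'-\gamma_n'<7<\gamma_n'/2$ (Lemma \ref{lemma:zeros}, via the Riemann--von Mangoldt formula with explicit constants plus a finite computation), so that each member of the pair has a non-excluded zero within half its height; your reflected-zeros count $\sum_{\gamma'}(\gamma-\gamma')^{-2}\geq N(\gamma)/(4\gamma^2)$ is more elementary, needing no explicit zero-counting constants at all, at the price of a direct check for the fifteen ordinates below $\gamma_{16}'\approx67$. Be aware that this check is tight in its worst case: for the pair $\{\gamma_1',\gamma_2'\}$ at the point $\gamma_1'$, the dominant neighbour $\gamma_2'$ is excluded from the sum, and the surviving zeros contribute only about $0.025$ against the required $4/\gamma_1'^2\approx0.020$, so it passes but not by much. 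In short, your identities (the cancellation in $\Xi=cZ$, the Taylor-matching with the Hadamard product giving $F(\gamma)=3\sum_{\gamma'\neq\gamma}(\gamma-\gamma')^{-2}$, and the reduction $\hat g>\bar g-3P\geq P$) are all verified correct; the two items you defer are precisely the content of the paper's Lemmas \ref{lemma:H} and \ref{lemma:zeros}, and once they are supplied by either your method or the paper's, the proof closes.
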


This estimate is obviously very good for consecutive and relatively large zeros. The following immediate corollary gives conditions for a pair of zeros to be a Lehmer pair.

\begin{corollary}
\label{cor:main}
With notations and assumptions as in Theorem \ref{thm:main}, the value $\hat{g}_{\left\{\gamma_1,\gamma_2\right\}}$ is always strictly positive. If $\left\{\gamma_-,\gamma_+\right\}$ is a Lehmer pair, then $\hat{g}_{\left\{\gamma_-,\gamma_+\right\}}<4/5$. If
\begin{equation}
\label{eq:leh}
\hat{g}_{\left\{\gamma_-,\gamma_+\right\}} < \frac{4}{5} - 3\left(\gamma_+-\gamma_-\right)^2\left(\frac{1}{\gamma_-^2} + \frac{1}{\gamma_+^2}\right),
\end{equation}
then $\left\{\gamma_-,\gamma_+\right\}$ is a Lehmer pair.
\end{corollary}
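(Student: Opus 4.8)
The plan is to read off all three assertions directly from the chain of inequalities established in Theorem \ref{thm:main}, since the corollary is advertised as immediate and all the analytic content has already been spent on the theorem. Abbreviating $\bar{g} \de \bar{g}_{\left\{\gamma_1,\gamma_2\right\}}$ and $\hat{g} \de \hat{g}_{\left\{\gamma_1,\gamma_2\right\}}$, and writing $E \de 3\left(\gamma_1-\gamma_2\right)^2\left(1/\gamma_1^2 + 1/\gamma_2^2\right)$ for the middle quantity in \eqref{eq:leh.cond}, Theorem \ref{thm:main} asserts under the Riemann Hypothesis that $0 < \bar{g} - \hat{g} < E < 3\hat{g}$. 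Each of the three claims will use exactly one link of this chain. The only preliminary observation required is that $E > 0$: the two zeros are distinct so $\left(\gamma_1-\gamma_2\right)^2 > 0$, and every zero $\gamma$ of $Z$ is nonzero (indeed $\gamma_1 \geq 14.1\ldots$), whence $1/\gamma_1^2 + 1/\gamma_2^2 > 0$.

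For the strict positivity of $\hat{g}$, I would combine $E > 0$ with the rightmost inequality $E < 3\hat{g}$ to conclude $\hat{g} > E/3 > 0$, which holds for any pair of simple zeros. For the necessity of the bound $\hat{g} < 4/5$, recall that $\left\{\gamma_-,\gamma_+\right\}$ being a Lehmer pair means, by definition, that $\bar{g} < 4/5$; since the leftmost inequality gives $\hat{g} < \bar{g}$, the desired bound $\hat{g} < \bar{g} < 4/5$ follows at once.

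For the sufficient condition \eqref{eq:leh}, I would invoke the inequality $\bar{g} - \hat{g} < E$ in the rearranged form $\bar{g} < \hat{g} + E$. Under the hypothesis $\hat{g} < 4/5 - E$ this yields $\bar{g} < \hat{g} + E < \left(4/5 - E\right) + E = 4/5$, so the defining inequality of a Lehmer pair is satisfied. Since the whole argument is a purely algebraic rearrangement of \eqref{eq:leh.cond}, there is no genuine obstacle to overcome; the single point deserving a word of care is the positivity of the error term $E$, after which the three statements are immediate.
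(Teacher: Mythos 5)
Your proposal is correct and is precisely the argument the paper intends: the corollary is stated as immediate from Theorem \ref{thm:main}, and each of your three deductions uses exactly the corresponding link of the chain $0 < \bar{g} - \hat{g} < E < 3\hat{g}$ together with the definition of a Lehmer pair ($\bar{g} < 4/5$). Your preliminary remark that $E > 0$ is in fact already implied by the first two links of the chain, so it is redundant, but harmless.
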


Observe that the expression on the right hand side of \eqref{eq:leh} is very close to $4/5$ for large zeros, especially when we have a good candidate for a Lehmer pair. Therefore, it is reasonable to conjecture that we can omit this small term without changing the conclusion of Corollary \ref{cor:main}. We also conjecture that $\inf\left\{\hat{g}_{\left\{\gamma_-,\gamma_+\right\}}\right\}=0$.

In Section \ref{sec:stat} we briefly analyse a stronger condition for pairs of zeros to be a Lehmer pair. We take a similar approach to that of Stopple in the second part of his paper, but, in our case, working with stationary points of the $Z$-function instead of the $\zeta$-function, see Theorem \ref{thm:lambda}.

The main reason for considering such a rephrasing of the definition of Lehmer pair was a na\"{\i}ve question by the present author, if is it possible to prove that a given pair is Lehmer by simply computing derivatives. Unfortunately, values of the $\Xi$-function are very small, even for small $t$, and therefore inappropriate for numerical calculations. In Section \ref{sec:num} we demonstrate that it is possible to calculate derivatives of the $Z$-function by methods of numerical integration. Therefore, it is an interesting question to see whether Corollary \ref{cor:main} can be practical as a means of testing for Lehmer pairs.

\section{Some lemmas}

The following lemma is crucial in Stopple's approach and also in ours. Its proof is very simple, but for the convenience of the reader we include it here.

\begin{lemma}
\label{lemma:sch}
Assume that $f(s)$ and $g(s)$ are holomorphic functions on some domain $\Omega\subseteq\CC$ such that $f(s)=(s-z)g(s)$ and $g(z)\neq0$ for some $z\in\Omega$. Then
\[
   \left(\left(\frac{f''}{f'}\right)'+\frac{1}{4}\left(\frac{f''}{f'}\right)^2\right)(z) = 3\left(\frac{g'}{g}\right)'(z).
\]
\end{lemma}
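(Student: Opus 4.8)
The plan is to prove the identity by a direct local computation at the point $z$, exploiting the fact that the factor $s-z$ vanishes there. First I would differentiate the relation $f(s)=(s-z)g(s)$ three times via the Leibniz rule and evaluate at $s=z$; since every term still carrying a factor $s-z$ then vanishes, this immediately yields the compact values
\[
   f'(z)=g(z), \qquad f''(z)=2g'(z), \qquad f'''(z)=3g''(z).
\]
In particular the hypothesis $g(z)\neq 0$ forces $f'(z)\neq 0$, so $f''/f'$ is holomorphic in a neighbourhood of $z$ and every expression in the statement is well defined there.

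Next I would rewrite the left-hand side in a form convenient for substitution. Setting $u=f''/f'$ and using $u'=f'''/f'-(f''/f')^2$, the bracketed quantity becomes
\[
   \left(\frac{f''}{f'}\right)'+\frac14\left(\frac{f''}{f'}\right)^2 = \frac{f'''}{f'}-\frac34\left(\frac{f''}{f'}\right)^2,
\]
which is a Schwarzian-type combination of the derivatives of $f$. Plugging in the three evaluated values above collapses this, at $s=z$, to $3g''(z)/g(z)-3\left(g'(z)/g(z)\right)^2$.

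Finally I would expand the right-hand side, $3(g'/g)'=3\left(g''/g-(g'/g)^2\right)$, and observe that at $z$ it equals precisely the same expression, completing the proof. The main obstacle is essentially nonexistent: the argument is a short, self-contained computation, and the only genuine subtlety is checking that $f''/f'$ makes sense near $z$, which is exactly the role played by the assumption $g(z)\neq 0$.
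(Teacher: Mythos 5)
Your proposal is correct and follows essentially the same route as the paper: differentiate $f(s)=(s-z)g(s)$ three times, evaluate at $s=z$ to get $f'(z)=g(z)$, $f''(z)=2g'(z)$, $f'''(z)=3g''(z)$, rewrite the left-hand side via $(f''/f')'=f'''/f'-(f''/f')^2$, and compare with $3(g'/g)'(z)$. Your extra remark that $g(z)\neq 0$ guarantees $f'(z)\neq 0$, so that all quotients are defined near $z$, is a small but welcome point of care that the paper leaves implicit.
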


\begin{proof}
We have $f'(s)=g(s)+(s-z)g'(s)$, $f''(s)=2g'(s)+(s-z)g''(s)$ and $f'''(s)=3g''(s)+(s-z)g'''(s)$. Therefore
\[
   \left(\frac{f''}{f'}\right)'(z) = \frac{f'''}{f'}(z) - \left(\frac{f''}{f'}\right)^2(z) = 3\frac{g''}{g}(z) - 4\left(\frac{g'}{g}\right)^2(z)
\]
and we get the desired equality.
\end{proof}

\begin{lemma}
\label{lemma:H}
Let $\zeta(s,z)=\sum_{n=0}^\infty(z+n)^{-s}$ be the Hurwitz zeta-function. For $t>1$ we have
\begin{equation}
\label{eq:lemmaH}
\frac{-56t^2}{\left(4t^2+1\right)^2} < \Re\left\{\zeta\left(2,\frac{1}{4}+\ie\frac{t}{2}\right)\right\} < \frac{3}{2t^2}.
\end{equation}
\end{lemma}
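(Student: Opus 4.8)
The plan is to reduce the claim to a quantitative comparison between a series and an integral. Writing $z=\frac14+\ie\frac t2$ and expanding the defining series of the Hurwitz zeta-function,
\[
\Re\left\{\zeta\left(2,\tfrac14+\ie\tfrac t2\right)\right\}=\sum_{n=0}^\infty\Re\frac{1}{\left(n+\frac14+\ie\frac t2\right)^2}=\sum_{n=0}^\infty\frac{\left(n+\frac14\right)^2-\frac{t^2}{4}}{\left(\left(n+\frac14\right)^2+\frac{t^2}{4}\right)^2}.
\]
Set $b\de t/2$, $a_n\de n+\frac14$ and $f(a)\de\frac{a^2-b^2}{(a^2+b^2)^2}$, so the quantity to estimate is $S(t)\de\sum_{n\ge0}f(a_n)$. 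Since the positive part of this series alone is of size $O(t^{-1})$, one cannot simply discard terms: the cancellation between positive and negative terms is essential. The decisive observation is that $f$ is an exact derivative,
\[
f(a)=\frac{\dif{}}{\dif a}\left(-\frac{a}{a^2+b^2}\right),
\]
and that the abutting unit intervals $[a_n-\tfrac12,a_n+\tfrac12]$ tile $[-\tfrac14,\infty)$, so their integrals combine:
\[
\sum_{n=0}^\infty\int_{a_n-\frac12}^{a_n+\frac12}f(a)\,\dif a=\int_{-1/4}^\infty f(a)\,\dif a=\left[-\frac{a}{a^2+b^2}\right]_{-1/4}^{\infty}=-\frac{4}{1+4t^2}.
\]
This identifies $-4/(1+4t^2)$ as the main term, which one checks already lies strictly between the two bounds of \eqref{eq:lemmaH} for every $t>1$ (the lower one amounts to $40t^2>4$).

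It remains to control the midpoint quadrature error $R\de S(t)+\frac{4}{1+4t^2}=\sum_{n\ge0}\bigl(f(a_n)-\int_{a_n-1/2}^{a_n+1/2}f\bigr)$. Since $f$ is smooth on $\RR$ (the denominator never vanishes because $b>0$), the elementary midpoint error bound gives $|f(a_n)-\int_{a_n-1/2}^{a_n+1/2}f|\le\frac1{24}\sup_{[a_n-1/2,a_n+1/2]}|f''|$ on each interval, whence
\[
|R|\le\frac{1}{24}\sum_{n=0}^\infty\ \sup_{[a_n-\frac12,a_n+\frac12]}|f''|.
\]
Rescaling $a=bu$ turns $f$ into $b^{-2}F(u)$ with $F(u)=(u^2-1)/(u^2+1)^2$, so that $f''(a)=b^{-4}F''(bu)$ and the right-hand side is $O(b^{-3})=O(t^{-3})$; more precisely the sum of suprema is dominated by $\int_{-1/4}^\infty|f''|$ together with the finitely many local maxima of $|f''|$, all of which are explicit. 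Thus $R$ is genuinely of lower order than the $O(t^{-2})$ main term.

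Finally I would combine the two pieces. The lower inequality of \eqref{eq:lemmaH} is equivalent to $R>-\frac{40t^2-4}{(4t^2+1)^2}$ and the upper one to $R<\frac{3}{2t^2}+\frac{4}{1+4t^2}$; both right-hand sides are positive for $t>1$ and behave like $\tfrac52 t^{-2}$, so it suffices to show that $|R|$ stays below their minimum. The hard part is precisely this last uniform estimate: the $O(t^{-3})$ bound on $|R|$ must be made completely explicit and verified to lie below the gap for \emph{all} $t>1$, the binding region being $t$ near $1$, where the remainder is not negligibly small. Carrying out the routine but careful evaluation of $\int_{-1/4}^\infty|f''|$ and of the peaks of $|f''|$ then closes the argument.
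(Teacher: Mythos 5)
Your reduction is set up correctly: the identity $\Re\{\zeta(2,\tfrac14+\ie\tfrac t2)\}=\sum_{n\ge0}f(a_n)$, the evaluation $\int_{-1/4}^\infty f=-\frac{4}{1+4t^2}$, and the restatement of the two inequalities as $R>-\frac{40t^2-4}{(4t^2+1)^2}$ and $R<\frac{3}{2t^2}+\frac{4}{1+4t^2}$ are all right. But the proof has a genuine gap exactly at the step you defer as ``routine but careful'': the per-interval midpoint bound $\frac{1}{24}\sup|f''|$ provably cannot deliver the required estimate for $t$ near $1$, no matter how carefully it is evaluated. Indeed $f''(a)=\Re\left\{6(a+\ie b)^{-4}\right\}$, so the global maximum of $|f''|$ is $|f''(0)|=6b^{-4}=96/t^4$, and the point $a=0$ lies inside your first midpoint interval $[-1/4,3/4]$. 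Hence the bound you propose for $|R|$ is at least $\frac{1}{24}\cdot\frac{96}{t^4}=\frac{4}{t^4}$ from that single interval, which at $t=1$ equals $4$, while the gap you must beat there is $\min\left(\frac{40t^2-4}{(4t^2+1)^2},\ \frac{3}{2t^2}+\frac{4}{1+4t^2}\right)=\frac{36}{25}=1.44$. Adding $\int_{-1/4}^\infty|f''|$ and the remaining local maxima only increases your bound, and numerically the obstruction persists until $t$ is around $1.5$--$1.7$; replacing the sup-form of the midpoint remainder by its integral-kernel form fares even worse (it stays above the gap until $t\approx2.4$). So the ``binding region near $t=1$'' is not a bookkeeping issue: your error-control mechanism fails there, and a different idea is needed for the near-origin term, e.g.\ computing the $n=0$ error $f(\tfrac14)-\int_{-1/4}^{3/4}f$ exactly (elementary, since $f$ has the primitive $-a/(a^2+b^2)$) and reserving the derivative bound for $n\ge1$.

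For contrast, the paper's proof sidesteps second derivatives entirely and is explicit for all $t>1$ at once: it keeps the one problematic value $f(0)$ exact, observes that the summand is negative at $n=0$, rises to its maximum $\frac{1}{2t^2}$, and then decreases to zero, and uses this unimodality for a first-order integral comparison over $[0,\infty)$ (not $[-1/4,\infty)$, so its main term is $+\frac{4}{1+4t^2}$ rather than your $-\frac{4}{1+4t^2}$), giving $f(0)+\int_0^\infty f\,\dif{n}-\frac{1}{2t^2}<\sum_{n\ge0}f(n)<\int_0^\infty f\,\dif{n}+\frac{1}{2t^2}$; both sides simplify to the stated rational bounds. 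That exact treatment of the large negative near-origin term is precisely what your outline is missing, and with it the rest of your argument could be salvaged, but as written the decisive uniform estimate is neither carried out nor obtainable by the tool you specify.
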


\begin{proof}
Choose an arbitrary $t>1$. It is easy to see that $\Re\left\{\zeta\left(2,1/4+\ie t/2\right)\right\} = \sum_{n=0}^\infty f(n)$ where
\[
f(n) \de 16\left(\left(1+4n\right)^2-4t^2\right)\left(\left(1+4n\right)^2+4t^2\right)^{-2}.
\]
We are interested in the behavior of this function on the interval $[0,\infty)$; at $n=0$ it is negative, on the interval $[0,\left(2\sqrt{3}t-1\right)/4]$ it increases  to positive maximum $1/2t^{-2}$, then it decreases to zero at infinity. Because of this behavior we have
\[
  \sum_{n=0}^\infty f(n) > f(0) + \int_{0}^\infty f(n)\dif{n} - \frac{1}{2t^2}
  = -\frac{\left(4t^2-1\right)\left(28t^2-1\right)}{2t^2\left(4t^2+1\right)^2}
\]
from which the left side of \eqref{eq:lemmaH} follows. On the other hand,
\[
   \sum_{n=0}^\infty f(n) < \int_{0}^\infty f(n)\dif{n} + \frac{1}{2t^2} < \frac{3}{2t^2}
\]
and the proof is complete.
\end{proof}

\begin{lemma}
\label{lemma:zeros}
Let $0<\gamma_1'\leq\gamma_2'\leq\gamma_3'\leq\ldots$ denote ordinates of zeros of the $\zeta$-function in the upper half-plane. Then $\gamma_{n+1}'-\gamma_n'<7$ for every $n\in\NN$.
\end{lemma}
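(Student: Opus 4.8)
The plan is to deduce the bound from the Riemann--von Mangoldt formula for the counting function of the zeros in the critical strip. Writing $N(T)$ for the number of zeros $\rho$ with $0<\Im\rho\le T$, one has the classical identity
\[
   N(T) = \frac{1}{\pi}\vartheta(T) + 1 + S(T), \qquad S(T) \de \frac{1}{\pi}\arg\zeta\left(\tfrac{1}{2}+\ie T\right),
\]
with $\vartheta$ the function from the definition of $Z(t)$ and the argument taken by continuous variation along the usual contour; this follows from the argument principle and the functional equation. Since $N$ is non-decreasing and integer-valued, it suffices to show that every interval of length $7$ (indeed, of length somewhat below $7$) contains an ordinate, i.e. that $N(T+7)-N(T)\ge 1$ for all admissible $T$, from which the strict inequality $\gamma_{n+1}'-\gamma_n'<7$ will follow.

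For the main, large-$T$ range I would compare the two terms on the right-hand side. Stirling's formula gives $\vartheta'(t)=\tfrac{1}{2}\log\tfrac{t}{2\pi}+O(t^{-2})$, so $\vartheta$ is increasing for $t$ large and
\[
   \frac{1}{\pi}\bigl(\vartheta(T+7)-\vartheta(T)\bigr) \ge \frac{7}{2\pi}\log\frac{T}{2\pi} + O\!\left(\frac{1}{T}\right).
\]
On the other hand, an explicit Backlund-type bound of the shape $|S(T)|\le a+b\log T$ valid for $T\ge T_1$, with $2b<\tfrac{7}{2\pi}$, yields $|S(T+7)-S(T)|\le |S(T+7)|+|S(T)|\le 2a+2b\log(T+7)$. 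Because the coefficient of $\log T$ in the $\vartheta$-term strictly exceeds that in the $S$-term, there is an explicit threshold $T_0$ beyond which
\[
   \frac{1}{\pi}\bigl(\vartheta(T+7)-\vartheta(T)\bigr) > |S(T+7)|+|S(T)|,
\]
forcing $N(T+7)-N(T)\ge 1$ for every $T\ge T_0$. For such $T$ one in fact finds a zero in every interval far shorter than $7$, so the inequality holds strictly and with much room to spare.

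The remaining, and genuinely delicate, part is the finite range $T<T_0$, where the growth argument gives nothing and one must appeal to the tabulated low-lying zeros. The decisive observation is that the very first gap is already the extremal one, $\gamma_2'-\gamma_1'\approx 21.022-14.135=6.887<7$, which is precisely why the constant $7$ rather than a smaller integer is required; all subsequent gaps below $T_0$ are comfortably smaller, consistent with the mean spacing $2\pi/\log(T/2\pi)$ decaying to zero. A direct check that $\gamma_{n+1}'-\gamma_n'<7$ for each of the finitely many pairs with $\gamma_n'<T_0$ then completes the argument. I expect the main obstacle to be making the constants fully explicit --- selecting a usable pointwise bound for $S(T)$ and pinning down $T_0$ so that the finite verification is provably exhaustive --- rather than any conceptual difficulty.
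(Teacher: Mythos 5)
Your proposal is correct and follows essentially the same route as the paper: an explicit Riemann--von Mangoldt formula (main term from $\vartheta$ versus an explicit bound on the error/$S(T)$ term, whose $\log T$ coefficient is small enough) yields a threshold $T_0$ beyond which every gap is well below $7$, and the finitely many gaps below $T_0$ are checked by computation, with the first gap $\gamma_2'-\gamma_1'\approx 6.89$ being the extremal one. The paper's only additional ingredient is practical: it switches to $H=6$ for $n\geq 2$ and invokes Trudgian's sharper bound (valid up to $6.8\times 10^6$) to shrink $T_0$ from $35370$ to $412$, keeping the finite verification small.
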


\begin{proof}
Denote by $N(T)$ the number of $\gamma_n'$'s not exceeding $T$. One form of the Riemann-von Mangoldt formula is
\begin{equation}
\label{eq:rm}
   \left| N(T) - \frac{T}{2\pi}\log{\frac{T}{2\pi e}} - \frac{7}{8} \right| \leq a\log{T} + b\log{\log{T}} + c + \frac{1}{5T}
\end{equation}
for $T\geq e$ and suitable positive constants $a,b,c$. The best known values are $a=0.110$, $b=0.290$ and $c=2.290$, see \cite{Platt}. From \eqref{eq:rm} we obtain
\begin{flalign}
\label{eq:rm.est}
N(T+H) - N(T) > &\left(\frac{H}{2\pi}-2a\right)\log{(T+H)} - 2b\log{\log{(T+H)}} \nonumber \\
&+ \frac{T}{2\pi}\log\left(1+\frac{H}{T}\right) - \frac{H}{2\pi}\log{(2\pi e)} - 2c - \frac{2}{5T}.
\end{flalign}
Therefore, for $H>4\pi a\approx1.38$ a constant $T_0$ exists such that $\gamma_{n+1}'-\gamma_n'\leq H$ for every $\gamma_n'>T_0$. The main advantage of \eqref{eq:rm.est} is that we can calculate $T_0$ for a given $H$.

Since $\gamma_2'-\gamma_1'<7$, it is enough to demonstrate
\begin{equation}
\label{eq:zeros.est2}
\gamma_{n+1}'-\gamma_n'<H=6\;\;\textrm{for}\;\;n\geq2.
\end{equation}
With help of \emph{Mathematica} we obtain from \eqref{eq:rm.est} the bound $T_0=35370$ (just for comparison, $H=1.4$ gives $T_0=4.7\times10^{1497}$). Furthermore, it is known (see \cite[p.~281]{Trud}) that for $T\in\left[0,6.8\times10^6\right]$ inequality \eqref{eq:rm} is true for constants $a=b=0$ and $c=2$. This fact lowers our bound to $T_0=412$. Up to this value we easily verified \eqref{eq:zeros.est2} by computer.
\end{proof}

\section{Proof of Theorem \ref{thm:main}}
\label{sec:proof}

Define the function $H$ by
\begin{equation}
\label{eq:H}
H(s) \de \frac{1}{2}(1-s)s\pi^{-s/2}\Gamma\left(\frac{s}{2}\right).
\end{equation}
Then the Riemann $\xi$-function is given by $\xi(s)\de H(s)\zeta(s)$ and $\Xi(t)=\xi(1/2+\ie t)$. Let $\rho_0$ be some simple zero of $\xi(s)$. By the Hadamard product formula for the $\xi$-function we have
\begin{equation}
\label{eq:xi}
\xi(s) = -\left(s-\rho_0\right)\prod_{\rho\neq\rho_0}e^{Bs+s/\rho_0}\rho_0^{-1}\left(1-\frac{s}{\rho}\right)e^{s/\rho}
\end{equation}
where $B$ is some constant. Applying Lemma \ref{lemma:sch} on \eqref{eq:xi}, we get
\begin{flalign}
F(\rho_0) \de \frac{1}{3}\left(\left(\frac{\xi''}{\xi'}\right)'+\frac{1}{4}\left(\frac{\xi''}{\xi'}\right)^2\right)(\rho_0) &= -\sum_{\rho\neq\rho_0}\left(\rho-\rho_0\right)^{-2} \label{eq:Fz} \\
& = \frac{1}{3}\left(\frac{\xi'''}{\xi'} - \frac{3}{4}\left(\frac{\xi''}{\xi'}\right)^2\right)(\rho_0) \nonumber.
\end{flalign}
If we take two simple zeros $\rho_1$ and $\rho_2$ and define $\Delta\de\rho_1-\rho_2$, then \eqref{eq:Fz} gives us
\begin{equation}
\label{eq:sum}
\Delta^2 \sum_{\rho\notin\left\{\rho_1,\rho_2\right\}}\frac{1}{\left(\rho-\rho_1\right)^{2}}+\frac{1}{\left(\rho-\rho_2\right)^{2}} = -\Delta^2\left(F(\rho_1)+F(\rho_2)\right)-2
\end{equation}
The left hand side of \eqref{eq:sum} is very similar to expression \eqref{eq:gbar}. Indeed, if we assume the Riemann Hypothesis, then $\bar{g}_{\{\gamma_1,\gamma_2\}}=\left(\gamma_1-\gamma_2\right)^2\left(F\left(\gamma_1\right)+F\left(\gamma_2\right)\right)-2$ where
\[
   F(t) = -\frac{1}{3}\left(\left(\frac{\Xi''}{\Xi'}\right)'+\frac{1}{4}\left(\frac{\Xi''}{\Xi'}\right)^2\right)(t).
\]
We would like to express function $F$ firstly in terms of the $\zeta$-function and then in terms of the $Z$-function. Taking $\zeta(\rho_0)=0$ into account, we simply differentiate $\xi(s)=H(s)\zeta(s)$ to find that
\begin{gather*}
\frac{\xi'''}{\xi'}(\rho_0) = 3\frac{H''}{H}(\rho_0) + 3\frac{H'\zeta''}{H\zeta'}(\rho_0) + \frac{\zeta'''}{\zeta'}(\rho_0), \\
\frac{3}{4}\left(\frac{\xi''}{\xi'}\right)^2(\rho_0) = 3\left(\frac{H'}{H}\right)^2(\rho_0) + 3\frac{H'\zeta''}{H\zeta'}(\rho_0) + \frac{3}{4}\left(\frac{\zeta''}{\zeta'}\right)^2(\rho_0)
\end{gather*}
and finally
\begin{equation}
\label{eq:funcF}
F(\rho_0) = \left(\frac{H'}{H}\right)'(\rho_0) + \frac{1}{3}\widetilde{F}(\rho_0) \;\;
\textrm{where} \;\; \widetilde{F}(\rho_0) \de \left(\frac{\zeta'''}{\zeta'} - \frac{3}{4}\left(\frac{\zeta''}{\zeta'}\right)^2\right)(\rho_0).
\end{equation}
By \eqref{eq:H} we obtain
\begin{equation}
\label{eq:funcH}
\left(\frac{H'}{H}\right)'(\rho_0) = \left(\log{H}\right)''(\rho_0) = -\frac{1}{(1-\rho_0)^2} - \frac{1}{\rho_0^2} + \frac{1}{4}\zeta\left(2,\frac{\rho_0}{2}\right).
\end{equation}

Let $\rho_0=1/2+\ie\gamma_0$. Straightforward calculations with $Z(\gamma_0)=0$ in mind give us
\begin{gather*}
\zeta'(\rho_0) = -\ie e^{-\ie\vartheta(\gamma_0)}Z'(\gamma_0), \\
\zeta''(\rho_0) = e^{-\ie\vartheta(\gamma_0)}\left(2\ie\vartheta'(\gamma_0)Z'(\gamma_0)-Z''(\gamma_0)\right), \\
\zeta'''(\rho_0) = e^{-\ie\vartheta(\gamma_0)}\left(\left(3\vartheta''(\gamma_0)-3\ie\vartheta'^2(\gamma_0)\right)Z'(\gamma_0)+3\vartheta'(\gamma_0)Z''(\gamma_0)+\ie Z'''(\gamma_0)\right).
\end{gather*}
From this, \eqref{eq:funcF} and \eqref{eq:F} we obtain $\widetilde{F}(\rho_0) = \widehat{F}(\gamma_0) + \ie 3\vartheta''(\gamma_0)$ where this is indeed the decomposition into the real and the imaginary part of $\widetilde{F}(\rho_0)$. By \eqref{eq:funcH} we have
\[
   \left(\frac{H'}{H}\right)'(\rho_0) = \frac{2\gamma_0^2-1/2}{\left(\gamma_0^2+1/4\right)^2} + \frac{1}{4}\zeta\left(2,\frac{1}{4}+\ie\frac{\gamma_0}{2}\right).
\]
Since $\Im\left\{\zeta\left(2,\rho_0/2\right)\right\} = -4\vartheta''(\gamma_0)$ by the definition of the Hurwitz zeta-function and the function $\vartheta$, $F(\rho_0) = (1/3)\widehat{F}(\gamma_0) + \varepsilon(\gamma_0)$
where
\[
   \varepsilon(\gamma_0) \de \frac{2\gamma_0^2-1/2}{\left(\gamma_0^2+1/4\right)^2} + \frac{1}{4}\Re\left\{\zeta\left(2,\frac{1}{4}+\ie\frac{\gamma_0}{2}\right)\right\}.
\]
By Lemma \ref{lemma:H} we have $0<\varepsilon(\gamma_0)<3\gamma_0^{-2}$, and by \eqref{eq:sum} we have
\[
   \bar{g}_{\left\{\gamma_1,\gamma_2\right\}}=\hat{g}_{\left\{\gamma_1,\gamma_2\right\}}+\left(\gamma_1-\gamma_2\right)^2\left(\varepsilon(\gamma_1)+\varepsilon(\gamma_2)\right).
\]
The combination of both gives the first two inequalities in \eqref{eq:leh.cond}.

To obtain the third inequality we need Lemma \ref{lemma:zeros}. Since $\gamma_2'-\gamma_1'<7<\gamma_1'/2$ by concrete calculations, it follows $\gamma_{n+1}'-\gamma_n'<\gamma_n'/2$ for every $n\in\NN$. Assuming the Riemann Hypothesis, by \eqref{eq:gbar} this implies $\bar{g}_{\left\{\gamma_1,\gamma_2\right\}}>4\left(\gamma_1-\gamma_2\right)^2\left(\gamma_1^{-2}+\gamma_2^{-2}\right)$ for every distinct zeros $\gamma_1,\gamma_2\geq\gamma_1'$ of the $Z$-function. This completes the proof of Theorem \ref{thm:main}.

\section{Stationary points of the $Z$-function}
\label{sec:stat}

Writing $Pf'(t)\de\left(f''/f'\right)'(t)$ for the \emph{pre-Schwarzian derivative} of $f'$, the assertion $-\left(\gamma_--\gamma_+\right)^2\left(P\Xi'(\gamma_-)+P\Xi'(\gamma_+)\right)<42/5$ implies that $\{\gamma_-,\gamma_+\}$ is a Lehmer pair. Stopple named such a pair \emph{a strong Lehmer pair} and showed by concrete calculations that this is indeed a much stronger condition. Define
\[
   \hat{\hat{g}}_{\left\{\gamma_-,\gamma_+\right\}} \de \frac{1}{3}\left(\gamma_{-}-\gamma_+\right)^2\left(-PZ'\left(\gamma_-\right)-PZ'\left(\gamma_+\right)\right)-2.
\]
By Corollary \ref{cor:main}, if
\begin{equation}
\label{eq:err1}
\hat{\hat{g}}_{\left\{\gamma_-,\gamma_+\right\}} < \frac{4}{5} - 3\left(\gamma_+-\gamma_-\right)^2\left(\frac{1}{\gamma_-^2} + \frac{1}{\gamma_+^2}\right),
\end{equation}
then $\left\{\gamma_-,\gamma_+\right\}$ is a Lehmer pair.

Stopple obtained in \cite[Theorem 3]{Stopple} a representation of $P\Xi'(\gamma)$ in terms of nearby zeros of $\zeta'(s)$. We can derive a much simpler expression for $PZ'(\gamma)$ of a similar nature. The $Z$-function is on the right half-plane $\HH_R\de\{z\in\CC\colon \Re\{z\}>0\}$ a holomorphic function. R.~Hall proved in \cite[Theorem 2]{Hall} that, assuming the Riemann Hypothesis, all zeros $u$ of $Z'(z)$ in $\HH_R$ are real. Furthermore, Lehmer's theorem \cite[\S8.3]{Edw} asserts that such a point $u$ is unique between consecutive zeros of the $Z$-function. We denote this nondecreasing sequence by $\{u_n\}$. For $t>e$ define the interval $\mathcal{I}_t\de[t-1/\log{\log{t}},t+1/\log{\log{t}}]$. We further assume the hypothesis
\begin{equation}
\label{eq:hypothesis}
\frac{1}{\zeta'\left(1/2+\ie\gamma\right)} = O(\gamma)
\end{equation}
for all positive zeros $\gamma$ of the $Z$-function. This follows from the ``weak'' Mertens' conjecture
\begin{equation*}
\int_{1}^{T}\left(\frac{M(x)}{x}\right)^2\dif{x} = O(\log{T})
\end{equation*}
where $M(x)=\sum_{n\leq x}\mu(n)$ and $\mu$ is the M\"{o}bius function, see Section 14.29 and proof of 14.30 in \cite{Titch}. Observe that \eqref{eq:hypothesis} implies the simplicity of zeros. Assuming all this, with $r=1/\log{\log{\gamma}}$ and $s_0=1/2+\ie\gamma$ a similar procedure as in the proof of \cite[Lemma 2]{Stopple} gives us
\begin{equation}
\label{eq:estimation}
-PZ'(\gamma) = \sum_{u\in\mathcal{I}_\gamma} \frac{1}{\left(\gamma-u\right)^2} + O\left(\log{\gamma}\log^2{\log{\gamma}}\right).
\end{equation}
A straightforward interpretation of this estimate is contained in the following theorem.

\begin{theorem}
\label{thm:lambda}
Assume the Riemann Hypothesis and \eqref{eq:hypothesis}. Let $\gamma_n$ be the $n$-th zero and $u_n\in\left(\gamma_n,\gamma_{n+1}\right)$ the $n$-th stationary point of $Z(t)$. Define
\[
   \alpha_n \de \frac{u_n-\gamma_n}{\gamma_{n+1}-\gamma_n}.
\]
If the set
\[
\mathscr{Z}\de\left\{n\geq2\colon \gamma_{n+1}-\gamma_n<\frac{1}{\log{\gamma_n}},\left\{u_{n-1},u_{n+1}\right\}\not\subset\mathcal{I}_{\gamma_n}\cup\mathcal{I}_{\gamma_{n+1}},
\alpha_n \in [0.44,0.56]\right\}
\]
is infinite, then the de Bruijn-Newman constant $\Lambda$ is $0$.
\end{theorem}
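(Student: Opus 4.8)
The plan is to force $\Lambda=0$ by squeezing it between the two bounds $\Lambda\le 0$ (this is the Riemann Hypothesis, which is assumed) and $\Lambda\ge 0$ (Newman's conjecture, which follows as soon as infinitely many Lehmer pairs are produced). Thus the whole task reduces to showing that each index $n$ in the infinite set $\mathscr{Z}$ — or at least all sufficiently large ones — yields a genuine Lehmer pair $\{\gamma_n,\gamma_{n+1}\}$. To do this I would invoke the strong criterion \eqref{eq:err1}: it suffices to establish $\hat{\hat{g}}_{\{\gamma_n,\gamma_{n+1}\}}<\tfrac{4}{5}-3(\gamma_{n+1}-\gamma_n)^2(\gamma_n^{-2}+\gamma_{n+1}^{-2})$, which after unwinding the definition of $\hat{\hat{g}}$ is equivalent to $(\gamma_{n+1}-\gamma_n)^2\bigl(-PZ'(\gamma_n)-PZ'(\gamma_{n+1})\bigr)<\tfrac{42}{5}-o(1)$.

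First I would feed the representation \eqref{eq:estimation} into $\hat{\hat{g}}$. Since $n\in\mathscr{Z}$ forces $\gamma_{n+1}-\gamma_n<1/\log\gamma_n$, which is far smaller than the half-width $1/\log\log\gamma_n$ of $\mathcal{I}_{\gamma_n}$, the interior stationary point $u_n\in(\gamma_n,\gamma_{n+1})$ sits deep inside both $\mathcal{I}_{\gamma_n}$ and $\mathcal{I}_{\gamma_{n+1}}$ and supplies the only unbounded contributions $(\gamma_n-u_n)^{-2}$ and $(\gamma_{n+1}-u_n)^{-2}$ to the two pre-Schwarzian sums. Writing $\delta=\gamma_{n+1}-\gamma_n$ and using $u_n-\gamma_n=\alpha_n\delta$, these two terms, after scaling by $\delta^2$, contribute exactly
\[
\delta^2\left(\frac{1}{(\gamma_n-u_n)^2}+\frac{1}{(\gamma_{n+1}-u_n)^2}\right)=\frac{1}{\alpha_n^2}+\frac{1}{(1-\alpha_n)^2}=\varphi(\alpha_n).
\]
An elementary analysis of $\varphi(\alpha)=\alpha^{-2}+(1-\alpha)^{-2}$ on $[0.44,0.56]$ — it is symmetric about $1/2$, where it attains its minimum $8$, and increases towards the endpoints, where $\varphi(0.44)=\varphi(0.56)\approx 8.354$ — shows that $\alpha_n\in[0.44,0.56]$ guarantees $\varphi(\alpha_n)<42/5=8.4$ with a fixed positive gap of roughly $0.046$.

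The heart of the argument is then to show that everything besides $\varphi(\alpha_n)$ is $o(1)$ as $n\to\infty$ along $\mathscr{Z}$, so that this fixed gap is never consumed. There are three error sources, and the defining conditions of $\mathscr{Z}$ are tailored to each. The remainder $O(\log\gamma\log^2\log\gamma)$ in \eqref{eq:estimation}, once multiplied by $\delta^2<\log^{-2}\gamma_n$, is $O(\log^2\log\gamma_n/\log\gamma_n)=o(1)$; the correction $3\delta^2(\gamma_n^{-2}+\gamma_{n+1}^{-2})$ in \eqref{eq:err1} is trivially $o(1)$; and the contributions of the neighbouring stationary points $u_{n-1},u_{n+1}$ must be shown negligible. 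Here I would read the containment clause $\{u_{n-1},u_{n+1}\}\not\subset\mathcal{I}_{\gamma_n}\cup\mathcal{I}_{\gamma_{n+1}}$ as keeping these neighbours a distance $\gg\delta$ from $\gamma_n,\gamma_{n+1}$, so that their scaled contributions $\delta^2(\gamma_n-u_{n-1})^{-2}$ and $\delta^2(\gamma_{n+1}-u_{n+1})^{-2}$ also tend to $0$. Collecting these estimates gives $(\gamma_{n+1}-\gamma_n)^2(-PZ'(\gamma_n)-PZ'(\gamma_{n+1}))=\varphi(\alpha_n)+o(1)<42/5$ for all large $n\in\mathscr{Z}$.

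I expect the main obstacle to be precisely this last uniform control of the auxiliary terms: one must verify that the full sum over the remaining stationary points in $\mathcal{I}_{\gamma_n}$ and $\mathcal{I}_{\gamma_{n+1}}$ (beyond $u_n$) is genuinely subordinate to the tiny gap $0.046$, which is delicate because the interval half-width $1/\log\log\gamma_n$ is comparatively large. The crux is showing that the spacing hypothesis $\delta<1/\log\gamma_n$ and the containment hypothesis on $u_{n\pm1}$ together rule out a second close stationary point capable of adding an $\Omega(1)$ scaled term; it is exactly to guarantee this that $\mathscr{Z}$ bundles the spacing, containment, and $\alpha_n\in[0.44,0.56]$ conditions. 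Granting this, infinitely many $n\in\mathscr{Z}$ satisfy \eqref{eq:err1}, so by Corollary \ref{cor:main} there are infinitely many Lehmer pairs; hence $\Lambda\ge 0$, and together with the assumed Riemann Hypothesis this yields $\Lambda=0$.
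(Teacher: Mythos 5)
Your proposal is correct and follows essentially the same route as the paper's own proof: feed \eqref{eq:estimation} into $\hat{\hat{g}}$, let the gap condition kill the error term, let the containment condition reduce the sums to the single term $\varphi(\alpha_n)=\alpha_n^{-2}+(1-\alpha_n)^{-2}$, and use $\alpha_n\in[0.44,0.56]$ to get $\varphi(\alpha_n)<42/5$ with a fixed margin, yielding infinitely many Lehmer pairs via \eqref{eq:err1} and hence $\Lambda=0$ under the Riemann Hypothesis. The ``main obstacle'' you flag at the end is in fact already disposed of by the definition of $\mathscr{Z}$: since the stationary points $u_m$ form a monotone sequence interlacing the zeros (Lehmer's theorem under RH), excluding $u_{n-1}$ and $u_{n+1}$ from $\mathcal{I}_{\gamma_n}\cup\mathcal{I}_{\gamma_{n+1}}$ excludes every $u_m$ with $m\neq n$, so no further uniform control is needed.
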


\begin{proof}
From \eqref{eq:estimation} it follows that
\begin{flalign}
\hat{\hat{g}}_{\left\{\gamma_n,\gamma_{n+1}\right\}} &= \frac{1}{3}\sum_{u\in\mathcal{I}_{\gamma_n}} \left(\frac{\gamma_{n+1}-\gamma_n}{u-\gamma_{n}}\right)^2 + \frac{1}{3}\sum_{u\in\mathcal{I}_{\gamma_{n+1}}} \left(\frac{\gamma_{n+1}-\gamma_n}{\gamma_{n+1}-u}\right)^2 - 2 \nonumber \\
&+ \left(\gamma_{n+1}-\gamma_n\right)^2 O\left(\log{\gamma_{n+1}}\log^2{\log{\gamma_{n+1}}}\right). \label{eq:err2}
\end{flalign}
The first assertion from the set $\mathscr{Z}$ guarantees that the error term tends to zero at infinity. Thus, for $\varepsilon=10^{-3}$ there exists an infinite subset $\mathscr{Z}'\subset\mathscr{Z}$ such that for every $n\in\mathscr{Z}'$ the absolute values of the error terms in \eqref{eq:err1} and \eqref{eq:err2} are not greater than $\varepsilon$. By the second assertion, the only contributing stationary point in the above sums is $u_n$. By the third assertion, for $n\in\mathscr{Z}'$ we have
\[
   \hat{\hat{g}}_{\left\{\gamma_n,\gamma_{n+1}\right\}} < \frac{1}{3}\left(\frac{1}{\alpha_n^2}+\frac{1}{(1-\alpha_n)^2}\right)-2+\varepsilon < 0.79 + \varepsilon < \frac{4}{5}-\varepsilon.
\]
Therefore, there exist an infinite number of Lehmer pairs, which implies $\Lambda=0$.
\end{proof}

Famous Montgomery's pair correlation conjecture states that
\[
   \liminf_{n\to\infty}\left(\gamma_{n+1}-\gamma_n\right)\log{\gamma_n} = 0,
\]
see \cite{BuiMilinovichNg} for a brief overview of the problem. This implies that the first assertion from the set $\mathscr{Z}$ is true for an infinite number of pairs. The author has calculated that the first two million zeros include $4637$ pairs of zeros which satisfy the first assertion, while $1901$ pairs actually belong to the set $\mathscr{Z}$.

\section{Numerical results}
\label{sec:num}

We can use Cauchy's integral formula to express higher derivatives since the $Z$-function is holomorphic in a neighborhoods of its zeros. This gives
\begin{equation}
\label{eq:int}
Z^{(n)}(x) = \frac{n!}{r^n}\int_{0}^{1}e^{-2\pi n\ie t}Z\left(x+re^{2\pi\ie t}\right)\dif{t}
   = \frac{n!}{r^n}\int_{0}^{1} G(t)\dif{t}
\end{equation}
where
\[
   G(t) \de \Re\left\{Z\left(x+re^{2\pi\ie t}\right)\right\}\cos{(2\pi nt)}+\Im\left\{Z\left(x+re^{2\pi\ie t}\right)\right\}\sin{(2\pi nt)}
\]
is a real function. We used \emph{Mathematica}'s function \texttt{RiemannSiegelZ} for the approximate calculation of the integral \eqref{eq:int} by the composite trapezoidal rule with tolerance $10^{-7}$. Observe that the parameter $n$ is involved only in the trigonometic functions which enables the simultaneous calculation of derivatives. The results we get for three pairs are summarized in Table \ref{tab:der}.

\begin{table}[h]
\begin{tabular}{lrrrr}
\hline \noalign{\smallskip}
$n$ & $\gamma_n$ & $Z'\left(\gamma_n\right)$ & $Z''\left(\gamma_n\right)$ & $Z'''\left(\gamma_n\right)$ \\
\hline \noalign{\smallskip}
$34$ & $111.02953554$ & $-1.590846$ & $\phantom{-}3.8401$ & $\phantom{-}1.4834$ \\
$35$ & $111.87465918$ & $\phantom{-}1.361151$ & $\phantom{-}2.2575$ & $-4.6657$ \\
$6709$ & $7005.06286617$ & $\phantom{-}0.414558$ & $-21.3028$ & $-57.1590$ \\
$6710$ & $7005.10056467$ & $-0.427037$ & $-23.2882$ & $-47.9737$ \\
$1048449114$ & $388858886.00228512$ & $-0.008173$ & $\phantom{-}150.552$ & $\phantom{-}123.981$ \\
$1048449115$ & $388858886.00239369$ & $\phantom{-}0.008173$ & $\phantom{-}150.565$ & $\phantom{-}122.665$ \\
\hline
\end{tabular}
\caption{Values of derivatives of the $Z$-function for three pairs of zeros.}
\label{tab:der}
\end{table}

The first pair is actually the first Lehmer pair while the second pair is probably the most famous one because Lehmer found it. The third pair is used in \cite{Csordas93} in order to obtain a lower bound for the de Bruijn-Newman constant.

These numbers are in agreement with calculations derived by the Python's \emph{mpmath} function \texttt{siegelz(z,derivative=n)}. This function gives for $1\leq n\leq4$ the $n$-th derivative of the $Z$-function at $z$.

\begin{table}[h]
\begin{tabular}{lrrr}
\hline \noalign{\smallskip}
$n$ & $-\left(\gamma_n-\gamma_{n+1}\right)^2PZ'\left(\gamma_n\right)$ & $\hat{g}$ & $\hat{\hat{g}}$ \\
\hline \noalign{\smallskip}
$34$ & 4.83 & \multirow{2}{*}{$0.57$} & \multirow{2}{*}{$1.08$} \\
$35$ & 4.41 &  &  \\
$6709$ & 3.95 & \multirow{2}{*}{$7\times10^{-3}$} & \multirow{2}{*}{$0.67$} \\
$6710$ & 4.07 &  &  \\
$1048449114$ & 4.00 & \multirow{2}{*}{$3\times10^{-5}$} & \multirow{2}{*}{$0.67$} \\
$1048449115$ & 4.00 &  &  \\
\hline
\end{tabular}
\caption{Values of $\hat{g}$ and $\hat{\hat{g}}$ for three pairs of zeros.}
\label{tab:g}
\end{table}

We use the values in Table \ref{tab:der} to obtain $\hat{g}$ and $\hat{\hat{g}}$ for corresponding pairs of zeros. The results are displayed in Table \ref{tab:g}. Observe that the value $\hat{g}$ for the second and the third pair is very close to zero while $\hat{\hat{g}}$ is approximately $2/3$. By Corollary \ref{cor:main} we know that $\hat{g}$ must be positive. Therefore, we have further evidence of ``the near failure of the Riemann Hypothesis''. On the other hand, by Theorem \ref{thm:lambda} the value $2/3$ is ``optimal'' for $\hat{\hat{g}}$ in the sense that the stationary point is exactly in the middle of the pair's zeros ($\alpha_n=1/2$) and the error term is small enough to be neglected. This is expected to be true for extremely close zeros.

\bigskip
\noindent\textbf{Acknowledgements.} We would like to thank Jeffrey Stopple for his interest in the subject, and Timothy Trudgian for his helpful remarks.


\providecommand{\bysame}{\leavevmode\hbox to3em{\hrulefill}\thinspace}
\providecommand{\MR}{\relax\ifhmode\unskip\space\fi MR }
\providecommand{\MRhref}[2]{%
  \href{http://www.ams.org/mathscinet-getitem?mr=#1}{#2}
}
\providecommand{\href}[2]{#2}


\end{document}